\newcommand{\Rmnum}[1]{\expandafter\@slowromancap\romannumeral #1@}
\begin{document}

\newtheorem{theorem}{Theorem}
\newtheorem{observation}[theorem]{Observation}
\newtheorem{corollary}{Corollary}
\newtheorem{algorithm}[theorem]{Algorithm}
\newtheorem{definition}{Definition}
\newtheorem{guess}[theorem]{Conjecture}
\newtheorem{claim}{Claim}
\newtheorem{problem}[theorem]{Problem}
\newtheorem{question}[theorem]{Question}
\newtheorem{lemma}{Lemma}
\newtheorem{proposition}[theorem]{Proposition}
\newtheorem{fact}[theorem]{Fact}

\makeatletter
  \newcommand\figcaption{\def\@captype{figure}\caption}
  \newcommand\tabcaption{\def\@captype{table}\caption}
\makeatother

\newtheorem{acknowledgement}[theorem]{Acknowledgement}

\newtheorem{axiom}[theorem]{Axiom}
\newtheorem{case}[theorem]{Case}
\newtheorem{conclusion}[theorem]{Conclusion}
\newtheorem{condition}[theorem]{Condition}
\newtheorem{conjecture}[theorem]{Conjecture}
\newtheorem{criterion}[theorem]{Criterion}
\newtheorem{example}[theorem]{Example}
\newtheorem{exercise}[theorem]{Exercise}
\newtheorem{notation}[theorem]{Notation}
\newtheorem{solution}[theorem]{Solution}
\newtheorem{summary}[theorem]{Summary}

\newenvironment{proof}{\noindent {\bf
Proof.}}{\rule{3mm}{3mm}\par\medskip}
\newcommand{\remark}{\medskip\par\noindent {\bf Remark.~~}}
\newcommand{\pp}{{\it p.}}
\newcommand{\de}{\em}
\newcommand{\mad}{\rm mad}
\newcommand{\qf}{Q({\cal F},s)}
\newcommand{\qff}{Q({\cal F}',s)}
\newcommand{\qfff}{Q({\cal F}'',s)}
\newcommand{\f}{{\cal F}}
\newcommand{\ff}{{\cal F}'}
\newcommand{\fff}{{\cal F}''}
\newcommand{\fs}{{\cal F},s}
\newcommand{\s}{\mathcal{S}}
\newcommand{\G}{\Gamma}
\newcommand{\g}{\gamma}
\newcommand{\wrt}{with respect to }

\newcommand{\qq}{\uppercase\expandafter{\romannumeral1}}
\newcommand{\qqq}{\uppercase\expandafter{\romannumeral2}}

\newcommand{\qed}{\hfill\rule{0.5em}{0.809em}}

\title{ Strong fractional choice number of series-parallel graphs}

\author{Xuer Li \thanks{Department of Mathematics, Zhejiang Normal University,  China. } \and Xuding Zhu\thanks{Department of Mathematics, Zhejiang Normal University,  China.  E-mail: xdzhu@zjnu.edu.cn. Grant Numbers: NSFC 11971438 and 111 project of Ministry of Education of China.}}

\maketitle

\begin{abstract}
	The strong fractional choice number of a graph $G$ is the infimum of those real numbers $r$ such that $G$ is 
	$(\lceil rm \rceil, m)$-choosable for every positive integer $m$. The strong fractional choice number of a family ${\cal G}$ of graphs is the supremum of the strong fractional choice number of graphs in ${\cal G}$.
		We denote by ${\cal{Q}}_k$ the class of series-parallel graphs with girth at least $k$. 
		This paper proves that for $k=4q-1, 4q,4q+1, 4q+2$, the strong fractional number of ${\cal{Q}}_k$ is exactly $2+ \frac{1}{q}$.
	
\noindent {\bf Keywords:}
strong fractional choice number; series-parallel graph

\end{abstract}

\section{Introduction}

A $b$-fold colouring of a graph $G$ is a mapping $\phi$ which assigns to each vertex $v$ of $G$ a set $\phi (v)$ of $b$ colours so that adjacent vertices receive disjoint colour sets. An $(a,b)$-colouring of $G$ is a $b$-fold colouring $\phi$ of $G$ such that $\phi(v) \subseteq \{1,2,\cdots,a\}$ for each vertex $v$. The {\em fractional chromatic number} of $G$ is
$$\chi_f(G)=\inf\{\frac a b :G \ \text{ is $(a,b)$-colourable}\}.$$

An \emph{ $a$-list assignment } of $G$ is a mapping $L$ which assigns to each vertex $v$ a set $L(v)$ of $a$ permissible colours. A \emph{$b$-fold $L$-colouring} of $G$ is a $b$-fold colouring $\phi$ of $G$ such that $\phi(v)\subseteq L(v)$ for each vertex $v$. We say $G$ is \emph{ $(a,b)$-choosable} if for any $a$-list assignment $L$ of $G$, there is a $b$-fold $L$-colouring of $G$. The {\em fractional choice number} of $G$ is $$ch_f(G)=\inf\{\frac a b :G \ \text{ is $(a,b)$-choosable} \}.$$

It was proved by Alon, Tuza and Voigt \cite{Chf} that for any finite graph $G$, $\chi_f(G)=ch_f(G)$ and moreover the infimum in the definition of $ch_f(G)$ is attained and hence can be replaced by minimum. This implies that if $G$ is $(a,b)$-colourable, then for some integer $m$, $G$ is $(am,bm)$-choosable. The integer $m$ depends on $G$ and is usually a large integer. A natural  question is for which $(a,b)$, $G$ is $(am,bm)$-choosable for any positive integer $m$. This motivated the definition of  strong fractional choice number  of a graph \cite{Zhu2018}.

\begin{definition}
	\label{def-strong}
	Assume $G$ is a graph and $r$ is a real number. We say $G$ is {\em strongly fractional $r$-choosable} if for any positive integer $m$, $G$ is $(\lceil rm \rceil, m)$-choosable. 
  The {\em strong fractional choice number} of $G$ is $$ch^*_f(G) = \inf\{r: \text{$G$ is strongly fractional $r$-choosable}\}.$$
   The strong fractional choice number of a class ${\cal G}$ of graphs is  $$ch^*_f({\cal G})= \sup \{ch^*_f(G):G \in {\cal G} \}.$$
\end{definition}

It follows from the definition that for any graph $G$, $ch_f^*(G) \ge ch(G)-1$. The variant $ch_f^*(G)$ is intended to be a refinement for $ch(G)$. However, currently we do not have a good upper bound for $ch_f^*(G)$ in terms of $ch(G)$. It was conjectured by Erd\H{o}s, 
Rubin and Taylor \cite{Erdos} that if $G$ is $k$-choosable, then $G$ is $(km,m)$-choosable for any positive integer $m$. If this conjecture were true, then we would have $ch_f^*(G) \le ch(G)$. But this conjecture is refuted recently by Dvo\v{r}\'{a}k, Hu and Sereni in \cite{Dvorak}. Nevertheless, it is possible that for any $k$-choosable graph $G$, for any positive integer $m$, $G$ is $(km+1,m)$-choosable. If this is true, we also have 
$ch_f^*(G) \le ch(G)$. In any case,   $ch_f^*(G)$ is an interesting 
graph invariant and there are many challenging problems concerning this parameter. The strong fractional choice number of planar graphs were studied in \cite{Zhu2018} and \cite{JZ2019}. Let ${\cal P}$ denote the family of planar graphs and for a positive integer $k$, let ${\cal P}_k$ be the family of planar graphs containing no cycles of length $k$.  It was proved in \cite{Zhu2018} that $5 \ge ch_f^*({\cal P}) \ge 4 + 2/9$ and prove in \cite{JZ2019} that $4 \ge ch_f^*({\cal P}_3) \ge 3+1/17$.

In this paper, we consider the strong fractional choice number of series-parallel graphs. For a positive integer $k$, let $${\cal Q}_k =\{G: \text{$G$ is a series-parallel graph with girth at least $k$} \}.$$ This paper proves the following result. 

\begin{theorem}
	\label{the1}
	Assume $q$ is a positive integer. For $k \in \{4q-1,4q,4q+1,4q+2\}$,  $ch^*_f({\cal Q}_k)=2+ \frac{1} {q}$.   
\end{theorem}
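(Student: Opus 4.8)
\medskip

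\noindent\textbf{Strategy.} It suffices to prove $ch^*_f(\mathcal{Q}_{4q-1})\le 2+\tfrac1q$ and $ch^*_f(\mathcal{Q}_{4q+2})\ge 2+\tfrac1q$, since $\mathcal{Q}_{4q+2}\subseteq\mathcal{Q}_{4q+1}\subseteq\mathcal{Q}_{4q}\subseteq\mathcal{Q}_{4q-1}$ gives, for every $k\in\{4q-1,\dots,4q+2\}$, the two-sided bound $2+\tfrac1q\le ch^*_f(\mathcal{Q}_{4q+2})\le ch^*_f(\mathcal{Q}_k)\le ch^*_f(\mathcal{Q}_{4q-1})\le 2+\tfrac1q$. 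So I split the work into an upper bound and a lower bound.

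\medskip

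\noindent\textbf{Upper bound.} Fix $m$, put $a=\lceil(2+\tfrac1q)m\rceil$, and prove every $G\in\mathcal{Q}_{4q-1}$ is $(a,m)$-choosable by induction on $|V(G)|$. Vertices of degree $\le1$ are deleted and recoloured greedily at the end (they see at most $m<a-m$ forbidden colours), graphs with a cut vertex are handled by the standard block argument, and a cycle is $(2m+1,m)$-choosable with $a\ge 2m+1$; so assume $G$ is $2$-connected and not a cycle. By the usual structure of series-parallel graphs, $G$ has two vertices $u,v$ joined by $p\ge2$ internally disjoint paths $T_1,\dots,T_p$ all of whose internal vertices have degree $2$ in $G$ (one $T_i$ being possibly the edge $uv$). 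Since every cycle has length $\ge4q-1$, the two shortest threads satisfy $|T_1|+|T_2|\ge4q-1$, hence $|T_2|\ge2q$, so some thread $T$ has at least $2q-1$ internal vertices. Deleting the internal vertices of $T$ leaves a strictly smaller $G'\in\mathcal{Q}_{4q-1}$ which is still $2$-connected (as $p\ge2$), and by induction $G'$ has an $m$-fold $L$-colouring $\phi$. It remains to extend $\phi$ along $T$, which is the content of the \textbf{Thread Lemma}: \emph{if $P$ is a path with at least $2q-1$ internal vertices, each assigned a list of size at least $\lceil(2+\tfrac1q)m\rceil$, then for any $m$-sets $A,B$ there is a proper $m$-fold $L$-colouring of the internal vertices of $P$ whose first vertex avoids $A$ and whose last vertex avoids $B$} (apply it with $A=\phi(u)$, $B=\phi(v)$). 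When $P$ has even length one puts $A$ on one class of $P$'s bipartition and fills the other greedily; the difficulty is the odd-length case with $A=B$, where the $m$ colours of $A$ must be ``rotated'' across the path and one checks that the surplus $\lceil m/q\rceil$ colours per vertex suffice over $2q-1$ steps, even when the internal lists differ.

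\medskip

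\noindent\textbf{Lower bound.} For each $m$ divisible by $q$ put $s=\lceil(2+\tfrac1q)m\rceil-1=2m+\tfrac mq-1$; I construct $G_m\in\mathcal{Q}_{4q+2}$ that is not $(s,m)$-choosable. The key ingredient is a gadget: a series-parallel graph $F$ with non-adjacent terminals $x,y$ in which every $x$--$y$ path has length at least $2q+1$ (hence $F$ has girth $\ge4q+2$), together with a list assignment $L$ with $|L(w)|=s$ for all $w$ and $L(x)=L(y)=\{1,\dots,s\}$, such that every proper $m$-fold $L$-colouring $\phi$ of $F$ has $\phi(x)\cap\phi(y)=\varnothing$, while at least one such colouring exists. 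Granting $F$, let $G_m$ be obtained from the odd cycle $C_{2q+1}$ by replacing each edge with a copy of $F$ (identifying its endpoints with $x,y$), assign $\{1,\dots,s\}$ to each of the $2q+1$ branch vertices and the lists of $F$ to each copy. Then $G_m$ is series-parallel, and every cycle either lies inside one copy of $F$ or traverses all $2q+1$ copies, so in both cases has length at least $4q+2$; thus $G_m\in\mathcal{Q}_{4q+2}$. In any proper $m$-fold $L$-colouring of $G_m$, the branch vertices receive $m$-subsets of $\{1,\dots,s\}$ with consecutive ones disjoint, i.e.\ a proper $m$-fold colouring of $C_{2q+1}$ using only $s=\tfrac{(2q+1)m}{q}-1<\chi_f(C_{2q+1})\cdot m$ colours — impossible. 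Hence $G_m$ is not $(s,m)$-choosable, so $ch^*_f(G_m)\ge2+\tfrac1q-\tfrac1m$, and letting $m\to\infty$ through multiples of $q$ yields $ch^*_f(\mathcal{Q}_{4q+2})\ge2+\tfrac1q$.

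\medskip

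\noindent\textbf{Main obstacle.} The hard part is building the gadget $F$. A single length-$3$ thread carrying the full list on its two internal vertices already forces its ends to satisfy $|\phi(x)\cap\phi(y)|\le\tfrac mq-1$, but sharpening ``small intersection'' to ``empty intersection'' must be done without ever creating a cycle of length $<4q+2$: one composes such threads in parallel with auxiliary paths of length at least $4q-1$ (which, being long, impose no constraint by the Thread Lemma) and iterates, so that the forced bound on $|\phi(x)\cap\phi(y)|$ shrinks step by step to $0$. Managing this recursion while all lists stay of size exactly $\lceil(2+\tfrac1q)m\rceil-1$, and keeping track of the ceiling function — the reason $m$ is taken divisible by $q$ — is the technical core; the parallel, but easier, obstacle on the upper-bound side is proving the Thread Lemma at this exact list size.
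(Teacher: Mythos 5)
Your overall architecture (upper bound for ${\cal Q}_{4q-1}$ by deleting a long degree-two thread and recolouring it, lower bound for ${\cal Q}_{4q+2}$ by a girth-respecting gadget) is reasonable, and the structural reduction in the upper bound is sound. But both halves lean on unproved lemmas, and in the lower bound the missing lemma is not just unproved — the method you sketch for obtaining it does not work.

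The lower bound is the real gap. Your gadget $F$ must force $\phi(x)\cap\phi(y)=\varnothing$ using only $x$--$y$ threads that are compatible with girth $4q+2$. You correctly compute that a length-$3$ thread with full internal lists forces $|\phi(x)\cap\phi(y)|\le \tfrac mq-1$, but that is the \emph{strongest} constraint any full-list thread can impose, and it cannot be improved by iteration: putting more structure in parallel only takes the conjunction of the individual constraints (and a second short thread is forbidden by girth), while chaining through intermediate vertices gives no transitivity ($|\phi(x)\cap\phi(z)|$ and $|\phi(z)\cap\phi(y)|$ small says nothing about $|\phi(x)\cap\phi(y)|$). A full-list thread of the girth-admissible length $2q+1$ is even weaker: it only forces $|\phi(x)\cap\phi(y)|\le q(s-2m)=m-q$. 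So no combination of full-list threads can shrink the intersection to $0$. The ingredient you are missing is that a \emph{single} path of length $2q+1$, with internal lists of size $s$ that are individually \emph{tailored} to a prescribed pair $(M_1,M_2)$ of $m$-sets (built from a chain of disjoint blocks $A_i,B_i,Z_i$ with $|Z_i|=s-2m$), admits no $m$-fold colouring whose ends are exactly $M_1$ and $M_2$. One then takes one such thread in parallel for \emph{every} bad pair; any two threads form a cycle of length $4q+2$, so girth is preserved. Once you have this, your whole $C_{2q+1}$-of-gadgets superstructure and the fractional-chromatic-number argument become unnecessary: with terminal lists of size $(2+\epsilon)m$ you can block all $\binom{(2+\epsilon)m}{m}^2$ pairs directly at a single pair of terminals, which is exactly what the paper does.

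On the upper bound, the ``Thread Lemma'' you state is precisely the technical heart of that direction, and your one-line sketch (``rotate the colours of $A$ and check the surplus suffices'') is not a proof; in particular the phrase about putting $A$ on one bipartition class makes no sense when the internal lists are arbitrary and distinct. The correct argument is an induction along the path that carries a quantitative invariant: at each internal vertex $v_j$ one exhibits a reservoir $T_j\subseteq L(v_j)$ (of size $m$ or $m+\epsilon m$ according to the parity of $j$) such that any target $m$-set meeting $T_j$ in at least $(1-t\epsilon)m$ colours is reachable. Losing $\epsilon m$ of slack per two steps is exactly why $2q$ internal edges and surplus $m/q$ are the right thresholds; without tracking this invariant the odd case with $A=B$ does not go through. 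This gap is fillable but is most of the work on that side.
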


\section{The proof of Theorem \ref{the1}}

Series-parallel graphs is a well studied family of graphs and there are many equivalent definitions for this class of graphs. 
For the purpose of using induction, we adopt the definition that recursively construct  series-parallel graphs from $K_2$ by series parallel constructions.

\begin{definition}
	\label{def-sp}
	A \emph{two-terminal series-parallel graph} $(G;x,y)$ is defined recursively as follows:
	\begin{itemize}
		\item   Let $V(K_2)= \{0,1\}$.Then $(K_2;0,1)$ is a two-terminal series-parallel graph.
		\item (The parallel construction) Let $(G;x,y)$ and $(G';x',y')$ be two vertex disjoint two-terminal series-parallel graphs. Define $G''$ to be the graph obtained from the   union of $G$ and $G'$ by identifying $x$ and $x'$ into a single vertex $x''$, and identifying $y$ and $y'$ into a single vertex $y''$. Then $(G'';x'',y'')$ is a two-terminal series-parallel graph.
		\item    (The series construction) Let again $(G;x,y)$ and $(G';x',y')$ be two vertex  disjoint two-terminal series-parallel graphs. Define $G''$ to be the graph obtained from the union of $G$ and $G'$ by identifying $y$ and $x'$ into a single vertex $x''$. Then $(G'';x,y')$ is a two-terminal series-parallel graph.
	\end{itemize}
	A graph is a series-parallel graph if there exist some two vertices $x$, $y$ such that $(G;x,y)$ is a two-terminal series-parallel graph.
\end{definition}

\begin{lemma}
	\label{lem1}
Assume $m, l$ are positive integers, $\epsilon >0$ is a real number,  $P_{l}= (v_0,v_1,...,v_{l})$ is a path and 
$L$ is a list assignment of path $P_{l}$ with  $|L(v_0)|=m$, $|L(v_i)|=2m+\epsilon m$ ($1 \leq i \leq l$). For $0 \leq j \leq l$, there is a  subset $T_j$ of $L(v_j)$, for which the following holds:
	
		\begin{itemize}
			\item  If $j=2t+1$ is odd, then $|T_j|=m+\epsilon m$;
				If $j=2t$ is even, then $|T_j|=m$.
			\item 	For any $m$-subset $B_j$ of $L(v_j)$  for which $|B_j \cap T_j| \ge (1-t \epsilon)m$, there exists an $m$-fold $L$-colouring $\phi$ for $P_{j}=(v_0,v_1,...,v_j)$  such that $\phi(v_j)=B_j$.
		\end{itemize}	
\end{lemma}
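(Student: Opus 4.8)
The plan is to induct on $j$, building the sets $T_0, T_1, \dots, T_l$ in order and verifying the two bulleted properties at each stage. For the base case $j=0$ (an even index, so $t=0$) I would take $T_0 = L(v_0)$, which already has exactly $m$ elements as required; the only $m$-subset $B_0$ of $L(v_0)$ with $|B_0\cap T_0|\ge m$ is $L(v_0)$ itself, and assigning $\phi(v_0)=B_0$ is a valid $m$-fold $L$-colouring of the single-vertex path $P_0$. Throughout I would, as the statement implicitly does, treat $\epsilon m$ as an integer.

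For the inductive step, suppose $T_j$ has already been constructed, and write $j=2t$ or $j=2t+1$. The crucial choice is to take $T_{j+1}$ \emph{disjoint from} $T_j$, which the hypothesis on list sizes makes possible: if $j=2t$ is even then $|L(v_{j+1})\setminus T_j|\ge (2m+\epsilon m)-m = m+\epsilon m$, so one can pick $T_{j+1}\subseteq L(v_{j+1})\setminus T_j$ with $|T_{j+1}|=m+\epsilon m$; if $j=2t+1$ is odd then $|L(v_{j+1})\setminus T_j|\ge (2m+\epsilon m)-(m+\epsilon m)=m$, so one can pick $T_{j+1}\subseteq L(v_{j+1})\setminus T_j$ with $|T_{j+1}|=m$. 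Either way $T_{j+1}$ has the size dictated by the parity of $j+1$, which settles the first bullet.

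For the second bullet, let $B_{j+1}$ be an $m$-subset of $L(v_{j+1})$ with $|B_{j+1}\cap T_{j+1}|\ge (1-t'\epsilon)m$, where $t'$ is the parameter attached to the index $j+1$ (so $t'=t$ when $j+1$ is odd and $t'=t+1$ when $j+1$ is even). Then $|B_{j+1}\setminus T_{j+1}|\le t'\epsilon m$, and since $T_{j+1}\cap T_j=\emptyset$ this gives $|B_{j+1}\cap T_j|\le t'\epsilon m$, hence $|T_j\setminus B_{j+1}|\ge |T_j|-t'\epsilon m$; a short computation shows this equals $(1-t\epsilon)m$ in both parity cases (using $|T_j|=m$, $t'=t$ in one case, and $|T_j|=m+\epsilon m$, $t'=t+1$ in the other). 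Because $|L(v_j)\setminus B_{j+1}|\ge (2m+\epsilon m)-m\ge m$ (and when $j=0$ one has $B_{j+1}\cap L(v_0)=\emptyset$ automatically), I can then choose an $m$-subset $B_j$ of $L(v_j)\setminus B_{j+1}$ with $|B_j\cap T_j|\ge (1-t\epsilon)m$. Applying the induction hypothesis to $B_j$ produces an $m$-fold $L$-colouring $\phi$ of $P_j$ with $\phi(v_j)=B_j$, and extending it by $\phi(v_{j+1})=B_{j+1}$ — which is legitimate because $B_j$ and $B_{j+1}$ are disjoint — yields the required colouring of $P_{j+1}$.

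I expect the only genuine work to be the parity bookkeeping: keeping straight how the parameter $t$ and the threshold $(1-t\epsilon)m$ evolve as $j$ increases, and confirming that the combination ``$T_{j+1}$ disjoint from $T_j$'' together with $|L(v_i)|=2m+\epsilon m$ for $i\ge 1$ is precisely what makes the threshold reproduce itself at each step. When $(1-t\epsilon)m\le 0$ the corresponding condition is vacuous, so no separate argument is needed in that range. This is not a deep difficulty, merely a careful case analysis.
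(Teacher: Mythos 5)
Your proposal is correct and follows essentially the same route as the paper: induct on $j$, pick $T_{j+1}$ inside $L(v_{j+1})\setminus T_j$ of the prescribed size, use the disjointness to bound $|B_{j+1}\cap T_j|$, and then choose $B_j\subseteq L(v_j)\setminus B_{j+1}$ meeting the threshold for the induction hypothesis. The only cosmetic difference is that you merge the two parity cases (and fold the $j=1$ base case into the general step), whereas the paper treats them separately.
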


\begin{proof}	
By induction on $j$. If $j=0$, then let $T_0=L(v_0)$; if $j=1$, then let $T_1=L(v_1)-T_0$. 
The conclusion is obviously true.

Assume $j \ge 2$ and the lemma holds for $j' < j$. 

\bigskip
\noindent
{\bf Case 1} $j=2t$ is even.

By induction hypothesis, there is an $(m+ \epsilon m)$-subset $T_{2t-1}$ of $L(v_{2t-1})$, such that 
for any  $m$-subset $B_{2t-1}$ of $L(v_{2t-1})$  for which $|B_{2t-1} \cap T_{2t-1}| \ge (1-(t-1) \epsilon)m$, there exists an $m$-fold $L$-colouring $\phi$ for $P_{2t-1}$  such that $\phi(v_{2t-1})=B_{2t-1}$.

As $|L(v_{2t})| = (2+\epsilon)m$, we have 
  $|L(v_{2t})-T_{2t-1}| \ge m$. Let $T_{2t}$ be any $m$-subset of $L(v_{2t})-T_{2t-1}$.
Assume $B_{2t}$ is an $m$-subset  of $L(v_{2t})$ with $|B_{2t} \cap T_{2t}| \ge (1-t \epsilon)m$. We shall show that   there exists an $m$-fold $L$-colouring $\phi$ for $P_{2t}$  such that $\phi(v_{2t})=B_{2t}$.

Note that $|B_{2t} \cap T_{2t-1}| \leq m-(1-t \epsilon )m=t \epsilon m$.
So   $|T_{2t-1}-B_{2t}| \ge  (1-(t-1) \epsilon )m$. Let $B_{2t-1}$ be an $m$-subset of $L(v_{2t-1})-B_{2t}$ containing 
at least $ (1-(t-1) \epsilon )m$ colours from $T_{2t-1}$.
By induction hypothesis, there exists an $m$-fold $L$-colouring $\phi$ of $P_{2t-1}$ with $\phi(v_{2t-1}) = B_{2t-1}$.
Now $\phi$ extends to an  $m$-fold $L$-colouring $\phi$ of $P_{2t}$ with $\phi(v_{2t}) = B_{2t}$.
 
\bigskip
\noindent
{\bf Case 2} $j=2t+1$ is odd.

By induction hypothesis, there is an $m$-subset $T_{2t}$ of $L(v_{2t})$, such that for  any  $m$-subset $B_{2t}$ of $L(v_{2t})$  for which $|B_{2t} \cap T_{2t}| \ge (1-t \epsilon)m$, there exists an $m$-fold $L$-colouring $\phi$ for $P_{2t}$  such that $\phi(v_{2t})=B_{2t}$.

As $|L(v_{2t+1})| = (2+\epsilon)m$, we have 
$|L(v_{2t+1})-T_{2t}| \ge (1+\epsilon)m$. Let $T_{2t+1}$ be any $(1+\epsilon)m$-subset of $L(v_{2t+1})-T_{2t}$.
Assume $B_{2t+1}$ is an $m$-subset  of $L(v_{2t+1})$ with $|B_{2t+1} \cap T_{2t+1}| \ge (1-t \epsilon)m$. We shall show that   there exists an $m$-fold $L$-colouring $\phi$ for $P_{2t+1}$  such that $\phi(v_{2t+1})=B_{2t+1}$.

Note that $|B_{2t+1} \cap T_{2t}| \leq m-(1-t \epsilon )m=t \epsilon m$.
So   $|T_{2t}-B_{2t+1}| \ge  (1-t \epsilon )m$. Let $B_{2t}$ be an $m$-subset of $L(v_{2t})-B_{2t+1}$ containing 
at least $ (1-t \epsilon )m$ colours from $T_{2t}$.
By induction hypothesis, there exists an $m$-fold $L$-colouring $\phi$ of $P_{2t}$ with $\phi(v_{2t}) = B_{2t}$.
Now $\phi$ extends to an  $m$-fold $L$-colouring $\phi$ of $P_{2t+1}$ with $\phi(v_{2t+1}) = B_{2t+1}$.
\end{proof}

\begin{corollary}
	\label{cor1}
	Assume $m$, $l$ are positive integers. Let 
	\[
	\epsilon=\begin{cases} \frac{2} {l-1}, & \text{if $l$ is odd} \\ \frac{2} {l}, &\text{if $l$ is even.} \end{cases}
	\]
	If $P_{l} = (v_0, v_1,..., v_l)$ is a path of length $l$ and $L$ is a list assignment of path $P_{l}$ with $|L(v_0)| = |L(v_l)| = m$, $|L(v_i)| = 2m+ \epsilon m$ for  $1\leq i \leq l-1$, then there is an $m$-fold $L$-colouring of $P_{l}$.
\end{corollary}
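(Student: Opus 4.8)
The plan is to deduce the corollary from Lemma~\ref{lem1} applied to the subpath $P_{l-1}=(v_0,v_1,\dots,v_{l-1})$. The restriction of $L$ to $P_{l-1}$ satisfies the hypotheses of Lemma~\ref{lem1} with the prescribed $\epsilon$: $|L(v_0)|=m$ and $|L(v_i)|=2m+\epsilon m$ for $1\le i\le l-1$. So Lemma~\ref{lem1}, with $j=l-1$, yields a set $T_{l-1}\subseteq L(v_{l-1})$ such that every $m$-subset $B_{l-1}$ of $L(v_{l-1})$ with $|B_{l-1}\cap T_{l-1}|\ge(1-t\epsilon)m$ extends to an $m$-fold $L$-colouring $\phi$ of $P_{l-1}$ with $\phi(v_{l-1})=B_{l-1}$; here $t=\lfloor(l-1)/2\rfloor$, and $|T_{l-1}|=m$ when $l-1$ is even while $|T_{l-1}|=(1+\epsilon)m$ when $l-1$ is odd.

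Because $|L(v_l)|=m$, in any $m$-fold $L$-colouring of $P_l$ we must have $\phi(v_l)=L(v_l)$, so it is enough to choose $\phi(v_{l-1})$ to be an $m$-subset $B_{l-1}$ of $L(v_{l-1})\setminus L(v_l)$ meeting $T_{l-1}$ in at least $(1-t\epsilon)m$ colours: then Lemma~\ref{lem1} colours $P_{l-1}$ with $\phi(v_{l-1})=B_{l-1}$, and setting $\phi(v_l)=L(v_l)$ (disjoint from $B_{l-1}$) extends this to an $m$-fold $L$-colouring of $P_l$. It therefore remains to exhibit such a $B_{l-1}$, and this is exactly where the value of $\epsilon$ is used.

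I would split by the parity of $l$. If $l$ is odd, then $l-1=2t$ with $t=(l-1)/2$ and $\epsilon=2/(l-1)=1/t$, so $t\epsilon=1$ and the condition on $B_{l-1}$ reduces to the vacuous $|B_{l-1}\cap T_{l-1}|\ge0$; since $|L(v_{l-1})\setminus L(v_l)|\ge(2+\epsilon)m-m\ge m$, any $m$-subset of $L(v_{l-1})\setminus L(v_l)$ works. If $l$ is even, then $l-1=2t+1$ with $t=l/2-1$ and $\epsilon=2/l$, so $t\epsilon=1-\epsilon$ and we need $|B_{l-1}\cap T_{l-1}|\ge\epsilon m$; from $|T_{l-1}|=(1+\epsilon)m$ we get $|T_{l-1}\setminus L(v_l)|\ge\epsilon m$, and since also $|L(v_{l-1})\setminus L(v_l)|\ge(1+\epsilon)m\ge m$ we may form $B_{l-1}$ by taking $\epsilon m$ colours of $T_{l-1}\setminus L(v_l)$ and completing it to size $m$ inside $L(v_{l-1})\setminus L(v_l)$. (Both cases can be unified by noting that $|T_{l-1}|=m+(1-t\epsilon)m$ and $0\le(1-t\epsilon)m\le|T_{l-1}\setminus L(v_l)|$ in either parity.) I do not anticipate a real obstacle here: all the substance is already contained in Lemma~\ref{lem1}, and what remains is the bookkeeping that makes $(1-t\epsilon)m$ land on $0$ or $\epsilon m$ for the two parities, together with the elementary counting just described.
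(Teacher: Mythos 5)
Your argument is correct and follows essentially the same route as the paper: apply Lemma~\ref{lem1} at $j=l-1$, choose $B_{l-1}\subseteq L(v_{l-1})\setminus L(v_l)$ meeting $T_{l-1}$ in at least $(1-t\epsilon)m$ colours, and finish with $\phi(v_l)=L(v_l)$. The only cosmetic difference is that the paper handles odd $l$ by shrinking $L(v_{l-1})$ to an $m$-set and reducing to the even case, whereas you invoke the lemma directly in both parities; the counting is the same.
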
	

\begin{proof}
	We divide the proof into two cases.
	
	\bigskip
	\noindent
	{\bf Case 1} $l=2t$ is even.

		By Lemma \ref{lem1},  there is an $(m+\epsilon m)$-subset $T_{l-1}$ of $L(v_{l-1})$, such that for any $m$-subset  $B_{l-1}$ of $L(v_{l-1})$, for which $|B_{l-1} \cap T_{l-1}| \ge (1-(t-1) \epsilon)m$, there exists an $m$-fold $L$-colouring $\phi$ for $P_{l-1}=(v_0,v_1,...v_{l-1})$, such that $\phi(v_{l-1})=B_{l-1}$.

		As  $t\epsilon m = m$, we have $|T_{l-1}-L(v_l)| \ge \epsilon m = (1-(t-1)\epsilon )m$.
		So there is an $m$-subset   $B_{l-1}$  of $L(v_{l-1})-L(v_l)$   containing at least $ (1-(t-1)\epsilon )m$ colours from $T_{l-1}$. 
		By Lemma \ref{lem1}, there exists an $m$-fold $L$-colouring $\phi$ of $P_{l-1}$ with $\phi(v_{l-1})=B_{l-1}$.
		Now $\phi$ can be extended to an $m$-fold $L$-colouring $\phi$ of $P_{l}$ with $\phi(v_{l})=L(v_l)$.

\bigskip
	\noindent
	{\bf Case 2} $l=2t+1$ is odd.
	
	Let $B$ be an $m$-subset of $L(v_{l-1})-L(v_l)$. 
	Let $L'(v_i)=L(v_i)$ for $1\le i \le l-2$ and $L'(v_{l-1}) =B$. 
	By Case 1, $P_{l-1} = (v_0,v_1,...v_{l-1})$ has an $m$-fold $L'$-colouring 
	$\phi$ with $\phi(v_{l-1})=B$.  
	Now $\phi$ can be extended to an $m$-fold $L$-colouring $\phi$ of $P_{l}$ with $\phi(v_{l})=L(v_l)$.
\end{proof}

\begin{lemma}
	\label{lem-girth}
	If $(G; x,y)$ is a series-parallel graph of girth $k$ and $l = \lceil k/2 \rceil$,
	then either $G$ itself is a path or $G$ contains a path $P=(v_0, v_1, \ldots, v_l)$ of length $l$ such that all the vertices 
	$v_1, v_2, \ldots, v_{l-1}$ are degree $2$ vertices of $G$, and none of them is a terminal vertex $x$ or $y$.
\end{lemma}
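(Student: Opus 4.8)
The plan is to induct on the recursive construction of the two-terminal series-parallel graph $(G;x,y)$, proving the slightly cleaner statement that every two-terminal series-parallel graph $(G;x,y)$ of girth at least $k$ either is a path (in which case, if its length is at least $l$, we extract the required subpath directly, and if its length is less than $l$ we note $G$ is just a path as the first alternative allows), or contains an induced path $P=(v_0,\dots,v_l)$ of length $l=\lceil k/2\rceil$ whose internal vertices $v_1,\dots,v_{l-1}$ all have degree $2$ in $G$ and are distinct from $x$ and $y$. The base case $(K_2;0,1)$ is a path, so there is nothing to prove.

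For the inductive step I would split according to whether the last construction was a series or a parallel operation, with $(G;x,y)$ built from $(G_1;x_1,y_1)$ and $(G_2;x_2,y_2)$. In the series case, $x=x_1$, $y=y_2$, and the identified vertex $z=y_1=x_2$ has degree $\deg_{G_1}(y_1)+\deg_{G_2}(x_2)$ in $G$. If both $G_1$ and $G_2$ are paths, then $G$ is a path $P_{x}\cdot P_{y}$; if its length is $\ge l$ we are done, otherwise $G$ is a path. If, say, $G_1$ is not a path, apply induction to get a path $P$ in $G_1$ with internal degree-$2$ vertices avoiding $x_1,y_1$; since the only vertex of $G_1$ whose degree changes in $G$ is $y_1$, and $P$'s internal vertices avoid $y_1$, the same $P$ works in $G$. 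The parallel case is similar: $x=x_1=x_2$ and $y=y_1=y_2$ are the only vertices whose degrees change; an internal degree-$2$ vertex of a path in a non-path component $G_i$ avoids $x_i,y_i$ by the inductive hypothesis, hence still has degree $2$ in $G$ and still avoids $x,y$.

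The main obstacle — and the reason the girth hypothesis is needed — is the case where \emph{both} $G_1$ and $G_2$ are paths but $G$ itself is not a path, which happens exactly under a parallel construction of two paths sharing both endpoints: then $G$ is a cycle $C$ whose length is at least $k$ by the girth assumption. Here I would argue directly: a cycle $C_n$ with $n\ge k$ contains a subpath on $l+1=\lceil k/2\rceil+1$ consecutive vertices, all of whose $l-1$ internal vertices have degree $2$ in $C_n$, hence in $G$; and since $n \ge k \ge 2l-1 \ge l+2$ when $q\ge 1$ (so $l\ge 2$), one can choose this subpath to avoid the two terminals $x,y$ — there is enough room on the cycle because removing the $\le 2$ terminals and the $l+1$ chosen vertices still leaves the cycle long enough precisely because $n\ge k$. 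A small amount of care is needed to check the inequality $n - 2 \ge l+1$, i.e. $n \ge l+3$, which follows from $n\ge k\ge 4q-1$ and $l=\lceil k/2\rceil \le 2q+1$, giving $k - l \ge 2q-2 \ge 0$; and when $q=1$, $k\ge 3$, $l=2$, a cycle of length $\ge 3$ still contains three consecutive vertices with a degree-$2$ middle vertex, and a triangle's terminals can be avoided since only one internal vertex is required. I would handle the smallest cases ($q=1$, short cycles) by a brief explicit check. Everything else is bookkeeping about which vertex degrees are altered by the two constructions, which is immediate from Definition \ref{def-sp}.
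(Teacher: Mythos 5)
Your proposal is correct and takes essentially the same route as the paper: an induction on the series-parallel decomposition whose only nontrivial case is a parallel composition of two paths, i.e.\ a cycle of length at least $k$, with the observation that the two compositions only change the degrees of the terminal vertices (the paper's write-up is just terser and does not spell out the degree bookkeeping or the cycle case). One small correction to your cycle analysis: you only need the $l-1$ internal vertices $v_1,\ldots,v_{l-1}$ to avoid $x$ and $y$ (the endpoints $v_0,v_l$ may be terminals), and this always holds because one of the two arcs of the cycle between $x$ and $y$ contains at least $\lceil (n-2)/2\rceil \ge \lceil (k-2)/2\rceil = l-1$ consecutive non-terminal vertices, so the stronger inequality $n\ge l+3$ you set up is not needed and no separate small-case check is required.
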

\begin{proof}
Assume $G$ contains a cycle $C$. If $G=C$, then the conclusion is true as $C$ has length at least $k$. 
Otherwise, $(G; x,y)$ is obtained from $(G_1; x_1,y_1)$ and $(G_2; x_2, y_2)$ by a series construction or a parallel construction. If one of   $(G_1; x_1,y_1)$ and $(G_2; x_2, y_2)$ contains a cycle, then $G_1$ or $G_2$ contains 
a required path. Otherwise, since $G$ contains a cycle, we conclude that $(G; x,y)$ is obtained from $(G_1; x_1,y_1)$ and $(G_2; x_2, y_2)$ by  a parallel construction, and for $i=1,2$, $G_i$ is a path connecting $x_i$ and $y_i$. Then $G$ is a cycle, and  the conclusion holds.
\end{proof}

\begin{theorem}
		\label{the2}
		Assume $q$, $m$ are positive integers, for any series-parallel graph $G$ with girth at least $k$, where $k\in \{4q-1,4q, 4q+1,4q+2\}$,  $G$ is $(\lceil(2+  \frac{1} {q}) m \rceil ,m)$-choosable.	
\end{theorem}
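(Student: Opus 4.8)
The plan is to prove, by induction on $|V(G)|$, that for $k\in\{4q-1,4q,4q+1,4q+2\}$ every series-parallel graph $G$ of girth at least $k$ is $(a,m)$-choosable, where $a=\lceil(2+\tfrac1q)m\rceil=2m+\lceil m/q\rceil$; the inductive statement will be invoked below on several graphs each having fewer vertices than $G$. Fix an $a$-list assignment $L$. Two standard reductions handle the degenerate cases: if $G$ has a vertex $v$ of degree at most $1$, colour $G-v$ by induction and extend to $v$ (at most $m$ colours are forbidden at $v$ and $|L(v)|=a\ge 2m$, so $m$ colours remain available); if $G$ is disconnected, colour each component by induction (a component of $G$ is a connected subgraph, hence again a series-parallel graph of girth at least $k$ with fewer vertices). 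Thus I may assume $G$ is connected with minimum degree at least $2$; then $G$ is not a path, and by definition $(G;x,y)$ is a two-terminal series-parallel graph for some $x,y$.

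The core of the argument is to delete a long internally-degree-$2$ subpath, colour the remainder inductively, and glue a path-colouring back in. Put $l=\lceil k/2\rceil\ge 2$; a direct check shows $l=2q$ when $k\in\{4q-1,4q\}$ and $l=2q+1$ when $k\in\{4q+1,4q+2\}$, and that in both cases the threshold $\epsilon$ appearing in Corollary \ref{cor1} equals $1/q$. By Lemma \ref{lem-girth}, since $G$ is not a path it contains a path $P=(v_0,v_1,\dots,v_l)$ of length $l$ all of whose internal vertices $v_1,\dots,v_{l-1}$ have degree $2$ in $G$ (if the girth of $G$ exceeds $k$, apply the lemma with the actual girth and then truncate $P$ to length exactly $l$; the internal vertices of the truncation still have degree $2$). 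As each $v_i$ with $1\le i\le l-1$ has in $G$ precisely the neighbours $v_{i-1},v_{i+1}$, every edge of $G$ incident to a vertex of $\{v_1,\dots,v_{l-1}\}$ is an edge of $P$; so, writing $G':=G-\{v_1,\dots,v_{l-1}\}$, we have $v_0,v_l\in V(G')$ and $E(G)=E(G')\sqcup E(P)$. Moreover $G'$ is series-parallel (series-parallel graphs are closed under taking subgraphs), has girth at least $k$, and has fewer vertices than $G$, so by induction it admits an $m$-fold $L$-colouring $\phi'$.

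To finish, I would apply Corollary \ref{cor1} to the path $P$ with the list assignment $L''$ defined by $L''(v_0)=\phi'(v_0)$, $L''(v_l)=\phi'(v_l)$, and $L''(v_i)$ an arbitrary $\bigl(2m+\lceil m/q\rceil\bigr)$-subset of $L(v_i)$ for $1\le i\le l-1$. Then $|L''(v_0)|=|L''(v_l)|=m$ and $|L''(v_i)|=2m+\lceil m/q\rceil\ge 2m+\tfrac1q m$, which meets the hypotheses of Corollary \ref{cor1} with threshold $\epsilon=1/q$ (or, when $q\nmid m$, with $\epsilon$ taken to be the slightly larger $\lceil m/q\rceil/m$ — see below), so $P$ has an $m$-fold $L''$-colouring $\psi$. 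Since $\psi(v_0)\subseteq L''(v_0)=\phi'(v_0)$ and $|\psi(v_0)|=m=|\phi'(v_0)|$, we get $\psi(v_0)=\phi'(v_0)$, and likewise $\psi(v_l)=\phi'(v_l)$; hence $\phi'$ and $\psi$ agree on $\{v_0,v_l\}$, and their common extension to $V(G)$ assigns to each vertex an $m$-subset of its $L$-list and is proper on every edge of $G$ (edges of $G'$ are handled by $\phi'$, edges of $P$ by $\psi$, and $E(G)=E(G')\cup E(P)$). That extension is the required $m$-fold $L$-colouring of $G$.

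I expect the only genuine obstacle to be parameter matching rather than any conceptual difficulty. Precisely, one must confirm that $\lceil k/2\rceil$ together with the girth hypothesis forces the path-colouring threshold to be exactly $\epsilon=1/q$, so that the interior list size $\lceil(2+\tfrac1q)m\rceil=2m+\lceil m/q\rceil$ is just barely large enough for Corollary \ref{cor1}; and, since $\lceil m/q\rceil$ may exceed $m/q$, one has to observe that Lemma \ref{lem1} and Corollary \ref{cor1} remain valid when $\epsilon$ is replaced by $\lceil m/q\rceil/m$ — this value is an integer multiple of $1/m$ (so every set $T_j$ in those proofs has integer size) and is at least the value stated, so enlarging the interior lists accordingly leaves every step of those proofs intact, the sole equality used in the proof of Corollary \ref{cor1} becoming a harmless inequality. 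Everything else — the two reductions, Lemma \ref{lem-girth}, and the gluing step — is routine.
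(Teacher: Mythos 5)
Your proof is correct and follows essentially the same route as the paper's: delete the internal vertices of a length-$\lceil k/2\rceil$ path of degree-$2$ vertices guaranteed by Lemma \ref{lem-girth}, colour the remainder by induction, and glue back a path colouring via Corollary \ref{cor1} with $\epsilon=1/q$. Your extra care about the $\lceil m/q\rceil$ versus $m/q$ discrepancy and about truncating the path when the girth exceeds $k$ addresses details the paper passes over silently, but the argument is the same.
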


\begin{proof}
	Assume $L$ is a $\lceil(2+  \frac{1} {q}) m \rceil$-list assignment of $G$. 
	We need to show that $G$ has an $m$-fold $L$-colouring. 
	The proof is by induction on the number of vertices of $G$.
	If $G$ is a path, then $G$ is $(2m,m)$-choosable, and we are done. Assume $G$ is not a path.
	By Lemma \ref{lem-girth}, $G$ has a path  $P=(v_0, v_1, \ldots, v_l)$ of length $l$ (where $l=2q$ when $k \in \{4q-1, 4q\}$; $l=2q+1$ when $k \in \{4q+1, 4q+2\}$), 
	such that 
	all the vertices $v_1, v_2, \ldots, v_{l-1}$ are degree $2$ vertices of $G$. 
	Let $G'=G- \{v_1, v_2, \ldots, v_{l-1}\}$. Then $G'$ is also a series-parallel graph of girth at least $4q-1$ or $G'$ is a path. If $G'$ is a series-parallel graph of girth at least $4q-1$, then by induction hypothesis, $G'$ has an $m$-fold $L$-colouring $\phi$; If $G'$ is a path, as path is $(2m,m)$-choosable, so $G'$ also has an $m$-fold $L$-colouring $\phi$.
    Let $L'$ be the list assignment of the path $P=(v_0, v_1, \ldots, v_{l})$ with $L'(v_0) = \phi(v_0)$, $L'(v_{l}) = \phi(v_{l})$ and $L'(v_i) = L(v_i)$ for $1 \le i \le l-1$. By Corollary \ref{cor1}, $P$ has an $m$-fold $L'$-colouring $\psi$. Then the union of $\phi$ and $\psi$ is an $m$-fold $L$-colouring of $G$.  
\end{proof}

By Theorem \ref{the2}, for $k \in \{4q-1, 4q, 4q+1, 4q+2\}$, the strong fractional choice number of ${\cal{Q}}_k$ is at most $2+\frac{1} {q}$. In order to show that equality holds, we need to construct, for each positive integer $m$,  a graph belongs to ${\cal{Q}}_k$, which is not $(  (2+ \frac{1}{q} )m  -1 ,m)$-choosable.

\begin{lemma}
	\label{lem2}
Assume $m$, $l$ are positive integers and assume that $\epsilon$ is a positive real number 
such that   $\epsilon m$ is an integer and 
\[
\epsilon < \begin{cases} \frac{2} {l-1}, & \text{if $l$ is odd} \\ \frac{2} {l}, &\text{if $l$ is even.} \end{cases}
\]
Let	$P_{l}= (v_0,v_1,...,v_{l})$ be a path. Let $M_1$, $M_2$ be $m$-sets. Then there exists a list assignment $L$ of $P_{l}$ for which the following holds:
	\begin{itemize}
		\item   $L(v_0)=M_1$ and $L(v_{l})=M_2$.
		\item 	$|L(v_i)|=2m+\epsilon m$ ($1 \leq i \leq l-1$), 
		\item   there is no $m$-fold $L$-colouring of $P_{l}$.
	\end{itemize}		
\end{lemma}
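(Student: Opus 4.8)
The plan is to construct the list assignment $L$ on $P_l$ so that it is, in a precise sense, the extremal example dual to Lemma~\ref{lem1} and Corollary~\ref{cor1}: every colour appearing on an internal vertex must be ``used up'' by the forced colourings propagating from both ends, and the deficit $\epsilon$ being strictly below the threshold is exactly what makes the two forced sets overlap and block a proper colouring. I would treat the two parities of $l$ in parallel, mirroring the case split of Corollary~\ref{cor1}.

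First I would record the quantitative obstruction. Suppose for contradiction that some $m$-fold $L$-colouring $\phi$ exists. Reading from the left end $v_0$ with $L(v_0)=M_1$ fixed, I want to argue that $\phi(v_i)$ is forced to contain ``many'' colours from a designated set $S_i^L\subseteq L(v_i)$, where $|S_i^L|$ grows by a controlled amount at each even/odd step; symmetrically, reading from the right end $v_l$ with $L(v_l)=M_2$, $\phi(v_i)$ must contain many colours from a designated set $S_i^R\subseteq L(v_i)$. The key design constraint is to choose the lists so that for some middle vertex $v_i$ the forced lower bounds $|\phi(v_i)\cap S_i^L|$ and $|\phi(v_i)\cap S_i^R|$ sum to strictly more than $m$ while $S_i^L$ and $S_i^R$ are disjoint, contradicting $|\phi(v_i)|=m$. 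The arithmetic that makes this work is precisely $\epsilon < 2/(l-1)$ (resp.\ $2/l$): this is the inequality guaranteeing that the cumulative ``slack'' $t\epsilon m$ from each side stays below $m$ until the sides meet, so that the two families of forced colours genuinely compete.

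Concretely, here is how I would build $L$. Put $L(v_0)=M_1$, $L(v_l)=M_2$. For the left side, let $A_1 = M_1$ play the role of the colours forced onto $v_1$; more generally maintain a set $S_i^L$ with $|S_i^L| = m$ when $i$ is even (including $i=0$) and $|S_i^L| = (1+\text{something})m$ when $i$ is odd, and define $L(v_i)$ on its left-half contribution to be $S_{i-1}^L \cup S_i^L$ with these disjoint. Do the symmetric construction from the right with sets $S_i^R$. One then merges: for an internal vertex $v_i$, set $L(v_i) = S_{i-1}^L \cup S_i^L \cup S_{i-1}^R \cup S_i^R$, arranging all these blocks to be pairwise disjoint (and of total size $2m+\epsilon m$ --- this is where the exact sizes must be tuned so that the union has the prescribed cardinality; the strict inequality on $\epsilon$ gives room to do this since the ``ideal'' extremal sizes would need the borderline $\epsilon$). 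Then a short induction from each end shows any $m$-fold $L$-colouring $\phi$ satisfies $|\phi(v_i)\cap S_i^L|\ge (1 - \lfloor i/2\rfloor\,\epsilon)m$ and the mirrored bound from the right, and choosing $i$ to be the meeting point yields the contradiction.

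The main obstacle I anticipate is the bookkeeping of the block sizes: getting $|L(v_i)| = 2m+\epsilon m$ exactly for every internal vertex while keeping all the relevant sub-blocks disjoint and keeping the propagated lower bounds tight enough to collide. This is a purely combinatorial balancing act, and it is essential that $\epsilon m$ is an integer (given) and that $\epsilon$ is \emph{strictly} below the threshold (so there is a positive gap to absorb rounding and to force the overlap rather than merely allow it). I would organize this by first doing the case $l$ even cleanly --- there the two ends are symmetric and the meeting vertex is $v_l$-side-balanced --- and then reducing $l$ odd to $l$ even exactly as in the proof of Corollary~\ref{cor1}, by splitting off the last edge: choose $L(v_{l-1})$ on a disjoint-from-$M_2$ part and invoke the even case on $P_{l-1}$ with a shortened target list. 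Everything else is the inductive propagation argument, which is the mirror image of Lemma~\ref{lem1} and should go through routinely once the lists are correctly specified.
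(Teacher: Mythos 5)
Your overall mechanism is the right one --- designate blocks inside each internal list, show by induction that any $m$-fold $L$-colouring is forced to place at least $m - t\epsilon m$ colours into a specific block of $L(v_t)$, and derive a contradiction when the accumulated slack is still below $m$ at the far end. This is exactly the engine of the paper's proof. However, your proposal has a genuine unresolved gap, and it sits precisely at the point you yourself flag as ``the main obstacle'': the two-sided, meet-in-the-middle architecture does not fit inside lists of size $2m+\epsilon m$. For the left-to-right propagation to force $|\phi(v_i)\cap S_i^L|\ge m - (\text{small})$, the complement $L(v_i)\setminus S_i^L$ must be almost entirely blocked by $\phi(v_{i-1})$, which forces $|L(v_i)\setminus S_i^L|\le m+(\text{small})$, i.e.\ $|S_i^L|\ge m+\epsilon m-(\text{small})$. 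The mirrored requirement gives $|S_i^R|\ge m+\epsilon m-(\text{small})$. Two such blocks cannot be disjoint inside a set of size $2m+\epsilon m$, so the collision vertex cannot simultaneously carry both strong forced bounds with disjoint target blocks; the ``tuning'' you defer is not a routine balancing act but the actual crux, and the four-pairwise-disjoint-blocks layout $S_{i-1}^L\cup S_i^L\cup S_{i-1}^R\cup S_i^R$ with each $S$-block near size $m$ is arithmetically impossible. You also never verify the final count $q\epsilon m<m$ that turns the hypothesis on $\epsilon$ into a contradiction.

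The paper sidesteps all of this by propagating from one end only. Its lists are $L(v_0)=M_1$, $L(v_1)=M_1\cup A_1\cup Z_1$, then alternating three-block lists $B_{2i}\cup A_{2i\pm 1}\cup Z_{2i\pm1}$ with $|A_r|=|B_s|=m$ and $|Z_t|=\epsilon m$, and crucially the \emph{last internal vertex's} list contains $M_2$ itself. The induction (Claim 1) shows $|\phi(v_{2j-2})\cap B_{2j-2}|\ge m-(j-1)\epsilon m$, losing only $\epsilon m$ per two steps; at the final vertex $v_{l-1}$ the colour set must avoid both $M_2$ (forced onto $v_l$) and the $m-(q-1)\epsilon m$ forced colours of its neighbour, leaving at most $q\epsilon m<m$ available colours. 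So the ``right-hand forcing'' is a single trivial step rather than a symmetric propagation. If you want to rescue your write-up, the cleanest fix is to abandon the meet-in-the-middle design and adopt this one-sided version: it is exactly the mirror of Lemma \ref{lem1} that you invoke, and it makes the cardinality bookkeeping immediate.
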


\begin{proof}
Let $A_r$ (for $r=1,3,5,...,2q-3$), $B_s$ (for $s=2,4,6,...,2q-2$), $Z_t$ (for $t=1,3,5,...,2q-1$) be disjoint colour sets,   where $|A_r|=|B_s|=m$, $|Z_t|=  \epsilon m$.
Let $L$ be the list assignment of $P_{l}$ defined as follows:

\begin{itemize}
	\item   $L(v_0)=M_1$, $L(v_{l})=M_2$, $L(v_1)=M_1 \cup A_1 \cup Z_1$. 
	\item 	$|L(v_{2i+1})|=B_{2i} \cup A_{2i+1} \cup Z_{2i+1}$, for $i=1,2,3,...,q-2$.
	\item   $|L(v_{2j})|=B_{2j} \cup A_{2j-1} \cup Z_{2j-1}$, for $j=1,2,3,...,q-1$.
	\item If $l=2q$, then $L(v_{2q-1})=M_2 \cup B_{2q-2} \cup Z_{2q-1}$; if $l=2q+1$, then 
	$L(v_{2q})=M_2 \cup A_{2q-1} \cup Z_{2q-1}$.
\end{itemize}

We shall show that $P_{l}$ is not $L$-colourable.

\begin{claim}
	\label{claim1}
	For any $j \in \{2,3,4,...,q\}$, if $\phi$ is an $m$-fold $L$-colouring of $P_{2j-2}$, then 
	$|\phi(v_{2j-2}) \cap B_{2j-2}| \ge m-  (j-1) \epsilon m$. 
\end{claim}
\begin{proof}
	 We shall prove the claim by induction on the index $j$. 
	 Assume $j=2$ and $\phi$ is an $m$-fold $L$-colouring of $P_2$. As $\phi(v_0) = M_1$, we conclude that 
	 $\phi(v_1) \subseteq   A_1 \cup Z_1$. Therefore $|\phi(v_2) \cap ( A_1 \cup Z_1)| \le \epsilon m $.
	 Hence $|\phi(v_2) \cap B_2| \ge  m-\epsilon m$. 
	 
	 Assume $j \ge 3$ and the claim holds for $j' < j$ and $\phi$ is an $m$-fold $L$-colouring of $P_{2j-2}$.
	 Apply induction hypothesis to the restriction of $\phi$ to $P_{2j-4}$, we conclude that   
	 $$|\phi(v_{2j-4}) \cap B_{2j-4}| \ge  m- (j-2)\epsilon m.$$ 
	 Hence $|\phi(v_{2j-3}) \cap B_{2j-4}| \le  (j-2) \epsilon m$. 
	 This implies that  $$| \phi(v_{2j-3}) \cap (A_{2j-3} \cup Z_{2j-3})| \ge m- (j-2) \epsilon m .$$ 
	 Hence $$|\phi(v_{2j-2}) \cap (A_{2j-3} \cup Z_{2j-3})| \le (j-1) \epsilon m.$$ 
	 So $|\phi(v_{2j-2}) \cap B_{2j-2}| \ge m-  (j-1) \epsilon m$. 
\end{proof}

Assume $l=2q$ is even and $\phi$ is an $m$-fold $L$-colouring of $P_{2q}$. Then 
$|\phi(v_{2q-2}) \cap B_{2q-2}| \ge m-  (q-1) \epsilon m$.
As $\phi(v_{2q})=M_2$, we conclude that 
$\phi(v_{2q-1}) \subseteq (B_{2q-2} - \phi(v_{2q-2})) \cup Z_{2q-1}$. But 
$|(B_{2q-2} - \phi(v_{2q-2})) \cup Z_{2q-1}| \le (q-1) \epsilon m + \epsilon m  = q \epsilon m < m$,
a contradiction.

  Assume $l=2q+1$ is odd and $\phi$ is an $m$-fold $L$-colouring of $P_{2q+1}$. By claim \ref{claim1}, $|\phi(v_{2q-2}) \cap B_{2q-2}| \ge m-(q-1) \epsilon m$. Hence $|\phi(v_{2q-1}) \cap B_{2q-2}| \leq (q-1) \epsilon m $. This implies that $|\phi(v_{2q-1}) \cap (A_{2q-1} \cup Z_{2q-1})| \ge m-(q-1) \epsilon m$. As $\phi(v_{2q+1})=M_2$, we conclude that $\phi(v_{2q}) \subseteq (A_{2q-1} \cup Z_{2q-1})- \phi(v_{2q-1})$. But $|(A_{2q-1} \cup Z_{2q-1})- \phi(v_{2q-1})| \leq m+ \epsilon m -(m-(q-1) \epsilon m) = q \epsilon m < m$, a contradiction.
\end{proof}

For disjoint two-terminal series-parallel graphs $(G_1;l_1,r_1)$ and $(G_2;l_2,r_2)$, we use $G_1 \parallel G_2$ to denote the parallel composition of $G_1$ and $G_2$ and $G_1 \bullet  G_2$ to denote the series composition of $G_1$ and $G_2$, respectively. For a two-terminal series-parallel graph $G$, we let $G^{<n>}$ denote the series composition of $n$ copies of $G$, and let $G_{<n>}$ denote the parallel composition of $n$ copies of $G$.

\begin{theorem}
	\label{the3}
	Assume $m$, $q$ are positive integers and assume that $\epsilon$ is a positive real number such that $\epsilon m$ is an integer and $\epsilon < \frac{1}{q}$. For $k \in \{4q-1,4q, 4q+1,4q+2\}$, there exists a graph $G \in {\cal{Q}}_k$, such that $G$ is not $((2+ \epsilon )m, m)$-choosable.
\end{theorem}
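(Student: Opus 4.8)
The plan is to build the desired graph $G$ by the parallel and series compositions in Definition~\ref{def-sp}, using the bad path list assignments produced by Lemma~\ref{lem2} as the building blocks, and using a parallel construction at the two terminals to force conflicting colour demands. Fix $m$, $q$, and $\epsilon$ with $\epsilon m \in \mathbb{Z}$ and $\epsilon < 1/q$, and set $l = 2q$ when $k \in \{4q-1, 4q\}$ and $l = 2q+1$ when $k \in \{4q+1, 4q+2\}$; note $l$ is even in the first case and odd in the second, and in both cases $\epsilon$ satisfies the strict inequality required by Lemma~\ref{lem2} (since $\epsilon < 1/q = 2/l$ when $l=2q$, and $\epsilon < 1/q = 2/(l-1)$ when $l = 2q+1$). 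Each such path has length $l \ge 2q \ge \lceil k/2 \rceil$, so gluing several of them between two common terminals produces cycles of length at least $2l \ge k$ when $l = 2q$ (for $k \le 4q$) and $2l = 4q+2 \ge k$ when $l = 2q+1$; I will need to check the girth bound case by case, but in each case the girth is exactly $k$ or slightly above, which is what membership in ${\cal Q}_k$ requires.

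The core idea: take two $m$-sets $M_1$ and $M_2$ for the terminals $x$ and $y$, and consider the complete bipartite-like obstruction at $y$. For each pair $(B_1, B_2)$ of $m$-subsets of $M_1$ and $M_2$ respectively, Lemma~\ref{lem2} (applied with the prescribed terminal lists $M_1$ on $v_0$ and $B_2$ on $v_l$, or more uniformly with $L(v_0) = M_1$, $L(v_l) = M_2$) gives a path whose internal lists of size $(2+\epsilon)m$ admit no $m$-fold $L$-colouring. I would first handle the simplest target: a single copy of such a path already fails to be $((2+\epsilon)m, m)$-choosable, but that path is not in ${\cal Q}_k$ for the girth reason — a path has infinite girth, so actually a single bad path \emph{is} series-parallel with girth at least $k$. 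Wait: a path has no cycles, so it trivially has girth at least $k$; hence the single path from Lemma~\ref{lem2} with $M_1, M_2$ arbitrary $m$-sets is already the required graph $G$, provided $l \ge 1$. So the proof reduces to: pick $M_1, M_2$ any disjoint (or equal, it does not matter) $m$-sets, invoke Lemma~\ref{lem2} to get a list assignment $L$ on $P_l$ with $|L(v_0)| = |L(v_l)| = m < (2+\epsilon)m$ and $|L(v_i)| = (2+\epsilon)m$ inside, which is not $m$-fold $L$-colourable; then extend $L$ to a genuine $\lceil(2+\epsilon)m\rceil$-list assignment by padding $L(v_0)$ and $L(v_l)$ with $\lceil(2+\epsilon)m\rceil - m$ arbitrary fresh colours. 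Since adding colours to lists only makes colouring easier, I must instead argue the other direction.

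So the real construction must \emph{force} the terminal lists down to effective size $m$. The standard trick is the parallel gadget: attach to the terminal $x$ a family of paths (via parallel composition at $x$ with several pendant structures sharing $x$) that collectively use up all but $m$ colours of any $\lceil(2+\epsilon)m\rceil$-list at $x$, and similarly at $y$. Concretely, I would take $N$ copies of a carefully chosen two-terminal gadget and compose them in parallel so that in any $m$-fold $L$-colouring, the colour set assigned to $x$ is constrained to lie in a fixed $m$-set $M_1$ (and $y$ in $M_2$), for every possible list at $x$; this is the familiar "$K_{s,t}$-style" saturation argument adapted to the fractional setting, and it is exactly the kind of block used in \cite{Zhu2018, JZ2019}. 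Then the middle path from Lemma~\ref{lem2}, run between $x$ and $y$ with these forced terminal sets, yields the contradiction. The \textbf{main obstacle} will be the girth: the parallel gadgets at $x$ and $y$ create short cycles, so I must design them as long paths or long series-parallel pieces of girth $\ge k$ rather than as small complete bipartite pieces — replacing each edge of the naive $K_{s,t}$-gadget by a path of length $\lceil k/2 \rceil$, so that every cycle through two parallel branches has length at least $k$. Verifying simultaneously that (i) the gadget still saturates the terminal list down to an $m$-set (which requires iterating Corollary~\ref{cor1}-type colour-counting over the gadget paths), and (ii) no short cycle is created, and (iii) the resulting girth lands exactly in $\{4q-1, 4q, 4q+1, 4q+2\}$ for each $k$, is the delicate bookkeeping. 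Once the gadget is in place, combining it with Lemma~\ref{lem2} via one more series-parallel composition finishes the argument, matching the upper bound from Theorem~\ref{the2} and completing the proof of Theorem~\ref{the1}.
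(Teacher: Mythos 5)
There is a genuine gap. You have correctly identified the building block (the bad paths of Lemma~\ref{lem2}, of length $\lceil k/2\rceil$, composed in parallel between two terminals) and the girth computation (every cycle uses two parallel branches, hence has length $2\lceil k/2\rceil \ge k$), and you correctly recognize that a single bad path is not a counterexample because its terminal lists have size $m$ and padding them only helps the colouring. But you never complete the non-choosability argument. Your final plan --- separate ``saturation'' gadgets at $x$ and at $y$ that force the colour set of each terminal down to a predetermined $m$-set for \emph{every} $(2+\epsilon)m$-list --- is left as ``delicate bookkeeping'' and is in fact the wrong shape: no gadget can force $\phi(x)$ to equal one fixed $m$-set $M_1$ regardless of the list; what a parallel gadget can do is exclude each candidate $m$-subset one branch at a time, and that exclusion is exactly the whole proof, not a preprocessing step.

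The paper's argument is the direct enumeration your second paragraph was groping toward, but applied to the terminal lists rather than to $m$-sets. Give the two terminals lists $X$ and $Y$ of size $(2+\epsilon)m$. Any $m$-fold $L$-colouring assigns to $(x,y)$ one of only $p=\binom{(2+\epsilon)m}{m}^2$ possible pairs of $m$-subsets. Let $G$ be the parallel composition of $p$ paths of length $\lceil k/2\rceil$, one for each pair $(M_1,M_2)$, and on that path install the Lemma~\ref{lem2} list assignment with terminal sets $M_1$ and $M_2$ (the internal lists have the required size $(2+\epsilon)m$; the hypothesis $\epsilon<1/q$ matches the bound $2/l$ or $2/(l-1)$ exactly as you checked). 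Then every candidate colouring of $\{x,y\}$ fails to extend across its own dedicated path, so $G$ admits no $m$-fold $L$-colouring, and $G\in{\cal Q}_k$ by your girth observation. Note where your sketch went astray: you took $M_1,M_2$ to be $m$-sets and ranged over their $m$-subsets (of which there is only one each), instead of ranging over the $m$-subsets of the full terminal lists $X$ and $Y$; fixing that quantification and dropping the saturation gadgets yields the paper's proof.
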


\begin{proof}
Let $p={\binom{(2+ \epsilon )m}{m}}^2$.

Let graph $G$ be obtained by making parallel composition of $p$ paths with length $\left \lceil \frac{k}{2} \right \rceil$. Denote the two terminals by $x$ and $y$. Then $G \in {\cal{Q}}_k$.

We shall show that $G$ is not $((2+ \epsilon )m, m)$-choosable. Let $X$ and $Y$ be two $(2m+ \epsilon m)$-sets. Let $L(x)=X$ and $L(y)=Y$. There are $p$ possible $m$-fold $L$-colourings of $x$ and $y$. Each such a colouring $\phi$ corresponds to one path with length $\left \lceil \frac{k}{2} \right \rceil$. In that path, define the list assignment $L$ as in the proof of Lemma \ref{lem2}, by replacing $M_1$ with $\phi (x)$ and $M_2$ with $\phi (y)$. Then Lemma \ref{lem2} implies that no $m$-fold $L$-colouring of $x$ and $y$ can be extended to $G$. 
\end{proof}

By theorem \ref{the3}, for $k\in \{4q-1,4q, 4q+1,4q+2\}$, the strong fractional choice number of ${\cal{Q}}_k$ is at least $2+\frac{1} {q}$. Combining with theorem \ref{the2}, this completes the proof of theorem \ref{the1}.

\end{document}